\def\section{\@startsection{section}{1}%
  \z@{.7\linespacing\@plus\linespacing}{.5\linespacing}%
  {\normalfont\centering}}
\newtheorem{thm}{Theorem}[section]
\newtheorem{lem}[thm]{Lemma}
\theoremstyle{definition}
\newtheorem{ex}[thm]{Example}
\numberwithin{equation}{section}
\newcommand{\be}{\begin{equation}}
\newcommand{\bmm}{\left[\begin{array}}
\newcommand{\emm}{\end{array}\right]}
\newcommand{\half}{\frac{1}{2}}
\begin{document}
\pagenumbering{arabic}

\title{Schur Multipliers of Nilpotent Lie Algebras}

\maketitle
\begin{center}{
{\footnotesize{LINDSEY R. BOSKO  \qquad lrbosko@ncsu.edu \\ ERNIE L. STITZINGER \qquad stitz@ncsu.edu} \\
\emph{Mathematics Department, North Carolina State University \\
Raleigh, North Carolina 27695, United States of America \\
}}}
\end{center}

\begin{abstract}
We consider the Schur multipliers of finite dimensional nilpotent Lie algebras.  If the algebra has dimension greater than one, then the Schur multiplier is non-zero.  We give a direct proof of an upper bound for the dimension of the Schur multiplier as a function of class and the minimum number of generators of the algebra.  We then compare this bound with another known bound. \\

\noindent \emph{Keywords:}  Schur multiplier \\
\end{abstract}

\section{Introduction}
The Lie algebra analogue of the Schur multiplier was investigated in the dissertations of Kay Moneyhun and Peggy Batten (see \cite{Moneyhun} and \cite{Batten}).  Among their results is that if $\dim L=n$, then $\dim M(L) \leq \half n(n-1)$ where $M(L)$ is the Schur multiplier of the Lie algebra, $L$.  A number of other results bounding $\dim M(L)$ have appeared (see \cite{Bosko}, \cite{Hardy}, \cite{Niroomand}, \cite{Salemkar}, and \cite{Yankosky}).  In this note several other bounds are provided.  In particular, we show that if $L$ is nilpotent and $\dim L>1$, then $\dim M(L) \neq 0$.   This result can be contrasted with a result of Johnson in \cite{Johnson} which shows that a $p-$group with trivial multiplier has restrictions placed on it.  We also find an upper bound for $\dim M(L)$ as a function of class and the number of generators for $L$. This result is similar to Theorem 3.2.5 of \cite{Karpilovsky}.  This Lie algebra result follows as a consequence of a general theory, provided in \cite{Salemkar} but we give a short, direct proof and contrast this bound with one found in \cite{Hardy}.

\section{Preliminaries}
Suppose that $L$ is generated by $n$ elements.  Let $F$ be a free Lie algebra generated by $n$ elements and $L \cong F/R$.  Since $R$ is an ideal in $F$, $R$ is also free.  Witt's formula from \cite{Bahturin} gives us
	\begin{equation}
	\label{Witt}
	\dim F^d/F^{d+1} = \frac{1}{d} \sum_{m \mid d} \mu(m) n^{d/m} \equiv l_n(d)
	\end{equation}
where $\mu$ is the M\"obius function.  Hence, $F/F^t$ is finite dimensional and nilpotent for all $t$.  

Let $N$ be an ideal in $L$ and $S$ be an ideal in $F$ such that $(S+R)/R \cong N$.  Recall that $M(L)=(F^2 \cap R)/[F,R]$ (\cite{Batten}).  Then $M(L/N) \cong (F^2 \cap (S+R))/[F,S+R]$.  It is routine to verify that there is a natural exact sequence
	\begin{equation}
	\label{exact1}
	0 \rightarrow \frac{R \cap [F,S]}{[F,R] \cap [F,S]} \rightarrow M(L) \rightarrow M(L/N) \rightarrow \frac{N \cap L^2}{[N,L]} \rightarrow 0.
	\end{equation}

Note that covers and multipliers can be computed using the GAP program \cite{Ellis}.

\section{Trivial M(L)}
It is shown in \cite{Johnson} that if $G$ is a finite $p-$group with $M(G)=e$, then severe restrictions are placed on $G$.  For further work in the problem see \cite{Webb} and also \cite{Wiegold} for a simpler proof.  We will show that if $L$ is a nilpotent Lie algebra with $M(L)=0$ then $\dim L \leq 1$.

Let $L$ be a nilpotent Lie algebra generated by $n > 1$ elements.  Hence, $\dim L/L^2=n$.  Let $F$ be a free Lie algebra generated by $n$ elements with $L \cong F/R$.  Suppose that $L$ has class $c$.  Hence, $F^{c+2} \subsetneq F^{c+1} \subseteq R$ using the result in the last section.  Furthermore, $F/F^{c+2}$ is finite dimensional and nilpotent of class $c+1$.  Then, 
	$$n= \dim L/L^2 = \dim \frac{F/R}{(F/R)^2} = \dim \frac{F}{F^2+R} \leq \dim F/F^2 = n.$$

\begin{lem}
If $L$ is nilpotent and $\dim L=n > 1$, then $R \subseteq F^2$ and $M(L) \cong R/[F,R]$.
\end{lem}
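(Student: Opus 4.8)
The plan is to observe that the two conclusions are linked: once the inclusion $R \subseteq F^2$ is established, the isomorphism $M(L) \cong R/[F,R]$ drops out immediately. Indeed, the Preliminaries record $M(L) = (F^2 \cap R)/[F,R]$, and if $R \subseteq F^2$ then $F^2 \cap R = R$, so this formula collapses to $R/[F,R]$. Hence the whole of the work is to prove $R \subseteq F^2$.

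For that I would argue directly from the chain of relations displayed just before the statement. Taking $F$ free on a minimal generating set of $L$, so that the number of free generators equals $\dim L/L^2$, we have
$$\dim L/L^2 = \dim \frac{F}{F^2 + R} \le \dim \frac{F}{F^2}.$$
Both ends equal the number of generators of $L$: the right because $F/F^2$ is abelian of rank equal to the number of free generators (this is \eqref{Witt} with $d = 1$), and the left because of the minimality of the generating set. Consequently the inequality must be an equality, which forces $(F^2 + R)/F^2 = 0$, i.e. $F^2 + R = F^2$, and this is precisely $R \subseteq F^2$. Substituting back into the multiplier formula then yields the second assertion.

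The step I would watch most closely is the identification of $\dim L/L^2$ with the minimal number of generators of $L$, since this is where nilpotency is genuinely used. The point is that a basis of the abelianization $L/L^2$ lifts to a generating set of $L$: if $H$ is the subalgebra generated by such lifts, then $H + L^2 = L$, and iterating $L = H + L^2 = H + L^3 = \cdots$ together with $L^{c+1} = 0$ gives $H = L$. (Note too that for nilpotent $L$ the hypothesis $\dim L > 1$ guarantees at least two generators, since a one-generated Lie algebra is at most one-dimensional.) Granting this standard fact, both the equality-forcing argument and the final substitution are routine, so I expect a short write-up.
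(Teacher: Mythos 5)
Your proof is correct and follows essentially the same route as the paper: the chain $n = \dim L/L^2 = \dim \frac{F}{F^2+R} \leq \dim F/F^2 = n$ forces $F^2 + R = F^2$, hence $R \subseteq F^2$, and Batten's formula $M(L) = (F^2 \cap R)/[F,R]$ then collapses to $R/[F,R]$. Your only addition is the explicit justification (via $L = H + L^2 = H + L^3 = \cdots = H$) that for nilpotent $L$ the dimension of $L/L^2$ equals the minimal number of generators, a standard fact the paper asserts without proof; that detail is also correct.
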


\begin{thm}
If $L$ is a finite dimensional nilpotent Lie algebra of dimension greater than $1$ and class $c$, then $M(L) \neq 0$.
\end{thm}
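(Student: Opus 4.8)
The plan is to reduce the entire theorem to the single assertion $[F,R]\subsetneq R$, since the Lemma just stated identifies $M(L)\cong R/[F,R]$ and the theorem is then immediate. First I would record the generator count: because $\dim L>1$ and $L$ is nilpotent, the minimal number of generators $n=\dim L/L^2$ must be at least $2$, as a nilpotent Lie algebra generated by a single element is abelian and hence one-dimensional. Thus the ambient free Lie algebra $F$ has rank $n\ge 2$, so by Witt's formula \eqref{Witt} it is infinite dimensional and, in particular, not nilpotent. From the Lemma I also take the two facts I need directly: $R\subseteq F^2$ and $M(L)\cong R/[F,R]$.

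The core is a descent argument by contradiction. Suppose $[F,R]=R$. Using $R\subseteq F^2$ as the base case and the defining relation $[F,F^{j}]=F^{j+1}$ of the lower central series, I would prove by induction that $R\subseteq F^{j}$ for every $j\ge 2$: if $R\subseteq F^{j}$, then $R=[F,R]\subseteq[F,F^{j}]=F^{j+1}$. Consequently $R\subseteq\bigcap_{j\ge 2}F^{j}$. Now I would invoke that $F$ is graded by degree with finite dimensional homogeneous components (again this is exactly what Witt's formula records for $F^{j}/F^{j+1}$), so that every nonzero element of $F$ has a well-defined finite top degree and therefore escapes some $F^{j}$. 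Hence $\bigcap_{j}F^{j}=0$, forcing $R=0$ and $L\cong F$, which contradicts the finite dimensionality of $L$ for $n\ge 2$. Therefore $[F,R]\subsetneq R$, and $M(L)\cong R/[F,R]\ne 0$.

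The step I expect to be the main obstacle is justifying $\bigcap_{j}F^{j}=0$, i.e. the residual nilpotence of the free Lie algebra: this is the one place where freeness (and not merely the inclusion $R\subseteq F^2$) is genuinely used, and it is precisely what excludes the degenerate possibility $[F,R]=R$ that an arbitrary ideal of an arbitrary Lie algebra could a priori satisfy. I would handle it cleanly through the degree grading supplied by \eqref{Witt} rather than by appealing to an external residual-nilpotence theorem. A secondary point worth stating carefully is that the theorem's hypothesis on the class $c$ is used only implicitly (it guarantees $L$ is nilpotent and finite dimensional, hence the contradiction at the end); the conclusion $M(L)\ne 0$ does not depend on the particular value of $c$.
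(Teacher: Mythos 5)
Your proof is correct, and it establishes the same key inequality as the paper --- $[F,R]\subsetneq R$, which combines with the Lemma to give $M(L)\cong R/[F,R]\neq 0$ --- but it finishes the descent differently. Both arguments exploit the implication ``if $[F,R]=R$ and $R\subseteq F^j$ then $R\subseteq F^{j+1}$.'' The paper stops this descent inside the finite-dimensional nilpotent truncation $F/F^{c+2}$: there the image of $R$ would be a nonzero ideal (nonzero because $F^{c+1}\subseteq R$ and, by Witt's formula, $F^{c+2}\subsetneq F^{c+1}$) equal to its own bracket with the whole algebra, which is impossible in a nilpotent algebra. This keeps the entire contradiction inside a finite-dimensional object and uses the class $c$ explicitly. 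You instead run the descent forever in $F$ itself, conclude $R\subseteq\bigcap_{j}F^{j}$, and invoke residual nilpotence of the free Lie algebra, $\bigcap_{j}F^{j}=0$, to force $R=0$ and hence the contradiction $L\cong F$ with $F$ infinite dimensional for $n\geq 2$. What your route buys is independence from the class $c$ and a clean conceptual statement (a free Lie algebra admits no nonzero ideal $R$ with $[F,R]=R$); what it costs is the appeal to the degree grading of $F$ with $F^{j}=\bigoplus_{d\geq j}F_{d}$, which is genuinely more than the dimension count in Eq.~\eqref{Witt} records --- it is a standard structural fact about free Lie algebras (available in the cited book of Bahturin), but you should cite it as such rather than attribute it to Witt's formula. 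Your preliminary point that $\dim L>1$ forces $n\geq 2$ generators is also implicit in the paper's standing hypothesis $n>1$, so there is no gap there.
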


\begin{proof}
Continuing with the notation, $L \cong F/R$, $F^{c+2} \subsetneq F^{c+1} \subseteq R$, and $F/F^{c+2}$ is nilpotent.  Hence, $[F,R] \subsetneq R$ and $M(L) \cong R/[F,R] \neq 0$.
\end{proof}

\section{An Upper Bound for $\dim M(L)$}

Suppose $L$ has class $c \geq 2$.  Let $N=L^c$ and $S=F^c$ in Eq. \ref{exact1}.  Then $L^c \cong (F^c+R)/R$ and $[F,S] = F^{c+1} \subseteq R$ since $L^{c+1}=0$.  Hence, Eq. \ref{exact1} becomes
	\begin{equation}
	\label{exact2}
	0 \rightarrow \frac{F^{c+1}}{[F,R] \cap F^{c+1}} \overset{\sigma}{\rightarrow} M(L) \rightarrow M(L/L^c) \rightarrow L^c \rightarrow 0.
	\end{equation}
	
\begin{thm}
\label{M(L)Bound1}
Let $L$ be a nilpotent Lie algebra of class $c$ which is generated by $n$ elements.  Then
	$$\dim M(L) \leq \sum_{j=1}^c l_n(j+1)$$
where $l_n (q) = \frac{1}{q} \sum_{s \mid q} \mu(s) n^{q/s}$.
\end{thm}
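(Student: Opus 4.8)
The plan is to first simplify the right-hand side. By Witt's formula (\ref{Witt}), $l_n(j+1) = \dim F^{j+1}/F^{j+2}$, so the sum telescopes:
$$\sum_{j=1}^{c} l_n(j+1) = \sum_{j=1}^{c} \dim \frac{F^{j+1}}{F^{j+2}} = \dim \frac{F^2}{F^{c+2}}.$$
Hence the assertion is equivalent to the cleaner inequality $\dim M(L) \leq \dim F^2/F^{c+2}$, and the strategy reduces to realizing $M(L)$ as a subquotient of $F^2/F^{c+2}$.

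Next I would invoke the Lemma, which supplies $M(L) \cong R/[F,R]$ together with $R \subseteq F^2$. Because $L$ has class $c$ we have $L^{c+1}=0$, that is $F^{c+1} \subseteq R$; applying the bracket $[F,-]$ gives the key containment $F^{c+2} = [F,F^{c+1}] \subseteq [F,R]$. Chaining these, $F^{c+2} \subseteq [F,R] \subseteq R \subseteq F^2$, so $R/[F,R]$ is a quotient of $R/F^{c+2}$, which is itself a subspace of $F^2/F^{c+2}$. Therefore
$$\dim M(L) = \dim \frac{R}{[F,R]} \leq \dim \frac{R}{F^{c+2}} \leq \dim \frac{F^2}{F^{c+2}} = \sum_{j=1}^{c} l_n(j+1),$$
which is exactly the stated bound.

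A second route, presumably the motivation for setting up (\ref{exact2}), is induction on the class $c$. The base case $c=1$ is abelian, where $R=F^2$, $[F,R]=F^3$, and $M(L)=F^2/F^3$ has dimension exactly $l_n(2)$, so equality holds. For the inductive step, exactness of (\ref{exact2}) yields $\dim M(L) = \dim A + \dim M(L/L^c) - \dim L^c \leq \dim A + \dim M(L/L^c)$ with $A = F^{c+1}/([F,R]\cap F^{c+1})$; the containment $F^{c+2} \subseteq [F,R]\cap F^{c+1}$ bounds $\dim A \leq l_n(c+1)$, while $L/L^c$ has class $c-1$ and the induction hypothesis gives $\dim M(L/L^c) \leq \sum_{j=1}^{c-1} l_n(j+1)$, so adding the two gives the claim. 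The only genuine obstacles are bookkeeping ones: verifying that $L/L^c$ is still minimally generated by $n$ elements (which uses $L^c \subseteq L^2$ for $c \geq 2$), and pinning down the containment $F^{c+2} \subseteq [F,R]$ that controls the top graded piece. Since the first argument avoids the induction altogether, I would present it as the primary proof.
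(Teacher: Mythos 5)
Your proposal is correct, but your primary argument takes a genuinely different route from the paper; your ``second route'' is, nearly line for line, the paper's actual proof. The paper proceeds by induction on the class $c$: the base case $c=1$ gives $M(L)=F^2/[F,R]$ with $F^3\subseteq [F,R]$, and the inductive step reads the bound off the exact sequence (\ref{exact2}), bounding $\dim A\leq l_n(c+1)$ via $F^{c+2}\subseteq [F,R]\cap F^{c+1}$ and embedding $M(L)/A$ into $M(L/L^c)$ (the paper uses only this embedding, not the full alternating-sum identity, so the $-\dim L^c$ term you wrote never appears). Your telescoping argument replaces all of this machinery: identifying $\sum_{j=1}^{c}l_n(j+1)$ with $\dim F^2/F^{c+2}$ and chaining $F^{c+2}=[F,F^{c+1}]\subseteq [F,R]\subseteq R\subseteq F^2$ exhibits $M(L)\cong R/[F,R]$ directly as a subquotient of $F^2/F^{c+2}$, with no induction and no exact sequence; this is shorter and makes the origin of the bound transparent. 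The trade-off is that your route leans on the Lemma, i.e.\ on $R\subseteq F^2$, which holds precisely because $n$ is the \emph{minimal} number of generators (so that $\dim L/L^2=n$ forces $F^2+R=F^2$); the paper's inductive proof never uses $R\subseteq F^2$, so it goes through verbatim for an arbitrary $n$-element generating set. Under the paper's standing convention of minimal generation (as set up in Section 3 and used to prove the Lemma), both arguments are valid and yield the same bound, and your direct one could reasonably replace the paper's.
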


\begin{proof}
Induct on $c$.  Let $F$ be free of rank $n$ where $L \cong F/R$.  If $c=1$, then $F/R$ is abelian, $F^2 \subseteq R$ and $M(L) = F^2/[F,R]$.  Since $F^3 \subseteq [F,R], \ \dim M(L) \leq \dim F^2/F^3=l_n (2)$.  Now, suppose that $c>1$.  By induction, $\dim M(L/L^c) \leq t \equiv \sum_{j=1}^{c-1} l_n (j+1)$.  In Eq. \ref{exact2}, let $A= Im (\sigma)$.  Then $M(L)/A \cong B \subseteq M(L/L^c)$.  Hence, $\dim M(L)/A \leq t$.  But $F^{c+1} \subseteq R$ and $F^{c+2} \subseteq [F,R] \cap F^{c+1}$.  Thus, $A$ is the homomorphic image of $F^{c+1}/F^{c+2}$.  Therefore $\dim A \leq l_n(c+1)$ by Eq. \ref{Witt} and $\dim M(L) \leq t + l_n (c+1)$ as desired.
\end{proof}

We compare our result to the upper bound given in \cite{Hardy}:

\begin{thm}
\label{M(L)Bound2}
If $L$ is a Lie algebra of dimension $n$, then 
	$$\dim M(L) \leq \half n(n-1) - \dim L^2.$$
\end{thm}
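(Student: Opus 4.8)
The plan is to argue directly from the free presentation $L\cong F/R$ and the formula $M(L)=(F^2\cap R)/[F,R]$ recorded in the Preliminaries, so that no nilpotency hypothesis is needed. Set $n=\dim L$, let $r=\dim L/L^2$ be the minimal number of generators, and take $F$ free of rank $r$. The whole estimate will rest on controlling the finite-dimensional space $F^2/[F,R]$, which sits between $M(L)$ and an exterior-square count.

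First I would record the exact sequence of vector spaces
$$0 \to \frac{F^2\cap R}{[F,R]} \to \frac{F^2}{[F,R]} \to \frac{F^2}{F^2\cap R} \to 0,$$
which is legitimate because $[F,R]\subseteq F^2\cap R$ (indeed $[F,R]\subseteq[F,F]=F^2$ and $[F,R]\subseteq R$). The right-hand term is $(F^2+R)/R$, namely the image $L^2$ of $F^2$ in $L$, so $\dim F^2/(F^2\cap R)=\dim L^2$, while the left-hand term is $M(L)$. Hence $\dim F^2/[F,R]=\dim M(L)+\dim L^2$, and the theorem reduces to the single inequality $\dim F^2/[F,R]\le\half n(n-1)$.

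For that inequality I would exhibit an explicit spanning set of size $\binom{n}{2}$. Lift a basis of $L/L^2$ to free generators $f_1,\dots,f_r$ of $F$, then choose $f_{r+1},\dots,f_n\in F$ whose images complete a basis $\bar f_1,\dots,\bar f_n$ of $L$; thus $F=\operatorname{span}\{f_1,\dots,f_n\}+R$. Since $F^2$ is the linear span of the simple brackets $[a,b]$, I would write $a=\sum_i c_if_i+\rho$ and $b=\sum_j d_jf_j+\sigma$ with $\rho,\sigma\in R$ and expand bilinearly; every term involving $\rho$ or $\sigma$ lies in $[F,R]$, leaving
$$[a,b]\equiv\sum_{i<j}(c_id_j-c_jd_i)\,[f_i,f_j]\pmod{[F,R]}.$$
So the $\binom{n}{2}$ classes $[f_i,f_j]+[F,R]$ with $1\le i<j\le n$ span $F^2/[F,R]$, giving $\dim F^2/[F,R]\le\half n(n-1)$ and hence the stated bound.

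The two reductions are routine; the step I expect to need the most care is the spanning argument, where one must use that the derived algebra $F^2$ is genuinely spanned by \emph{simple} brackets $[a,b]$ (the Jacobi identity making this span an ideal) and check that the bilinear expansion drives all $R$-contributions into $[F,R]$ and not merely into $R$. An alternative route would invoke the exact sequence $0\to M(L)\to L\wedge L\to L^2\to 0$ together with the surjection $\Lambda^2L\twoheadrightarrow L\wedge L$ arising from the extra Lie relation in the nonabelian exterior square, but the presentation argument above has the advantage of staying entirely within the machinery already set up in the paper.
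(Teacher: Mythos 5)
The paper itself gives no proof of this statement: it is quoted as the known upper bound from \cite{Hardy}, so there is no in-paper argument to compare yours against, and I am judging your proof on its own terms. Its core is correct. Since $[F,R]\subseteq F^2\cap R$, your exact sequence is legitimate, $F^2/(F^2\cap R)\cong (F^2+R)/R=L^2$, and the bilinear expansion modulo $[F,R]$ does show that the $\binom{n}{2}$ classes $[f_i,f_j]+[F,R]$, $i<j$, span $F^2/[F,R]$; together with $\dim F^2/[F,R]=\dim M(L)+\dim L^2$ this is exactly the stated inequality. This is the natural refinement of Moneyhun's argument for $\dim M(L)\leq \half n(n-1)$, and it stays entirely within the Hopf-formula machinery the paper sets up in its Preliminaries.

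One claim in your setup is wrong, although it turns out to be inessential. The theorem is stated for an \emph{arbitrary} Lie algebra of dimension $n$, and for such algebras $\dim L/L^2$ is \emph{not} the minimal number of generators; that identification is a nilpotent-algebra fact (the Lie analogue of the Burnside basis theorem). For $L$ simple one has $L^2=L$, so your $r$ would be $0$ and no presentation $L\cong F/R$ with $F$ free of rank $r$ exists; already for the two-dimensional nonabelian solvable algebra, $\dim L/L^2=1$ while $L$ cannot be generated by one element. So the free presentation with which you open the proof may fail to exist. The repair is simply to drop minimality: take $F$ free of rank $n$ with its free generators mapping onto a basis of $L$, or indeed any free presentation together with arbitrary preimages $f_1,\dots,f_n\in F$ of a basis of $L$. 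The only property your argument ever uses is $F=\operatorname{span}\{f_1,\dots,f_n\}+R$, which holds in that generality, and nothing downstream of that line needs to change.
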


We now examine the two theorems applied to different Lie algebras.
\begin{ex}
Let $F$ be a free Lie algebra on $2$ generators and $L=F/F^3$.  Then $L$ is a Lie algebra of $2$ generators and class $2$.  So, $L \supseteq L^2 \supseteq L^3 = 0$.  Then, $\dim L/L^2 = l_2(1)=2$, $\dim L^2/L^3 = l_2(2)=\half[\mu(1)2^2+\mu(2)2]=\half(4-2)=1$.  Thus, $\dim L = 3$ and by Theorem \ref{M(L)Bound2}, $\dim M(L) \leq 2$.  By Theorem \ref{M(L)Bound1}, 
	\begin{align*}
		\dim M(L) \leq \sum_{j=1}^2 l_2(j+1) &= l_2(2) + l_2(3) \\
		&=1 + \frac{1}{3}(\mu(1)2^3 + \mu(3)2) \\
		&=1 + \frac{1}{3}(6)=3.
	\end{align*}
Thus, the result of Theorem \ref{M(L)Bound2} proves to be a better bound than the one obtained by our new theorem.
\end{ex}

\begin{ex}
Let $F$ be a free Lie algebra of $2$ generators and $L=F/F^4$.  Then $L$ is a Lie algebra of $2$ generators and class $3$.  
Thus, $\dim L = 5$ and by Hardy's Theorem, $\dim M(L) \leq 7$.  By LB's Theorem, 
	\begin{align*}
		\dim M(L) \leq \sum_{j=1}^3 l_2(j+1) &= l_2(2) + l_2(3) + l_2(4) \\
		&=1 + 2 + \frac{1}{4}[\mu(1)2^4 + \mu(2)2^2 + \mu(4)2] \\
		&=3 + \frac{1}{4}(16-4) \\
		&=3 + 3 =6
	\end{align*}
Thus, we see that in this case, our theorem creates a better upper bound for $\dim M(L)$ than the previously known result.
\end{ex}

\end{document}